\def\@seccntDot{.}
\def\@seccntformat#1{\csname the#1\endcsname\@seccntDot\hskip 0.5em}
\renewcommand\section{\@startsection{section}{1}{\z@}%
{18\p@ \@plus 6\p@ \@minus 3\p@}%
{9\p@ \@plus 6\p@ \@minus 3\p@}%
{\large\bfseries\boldmath}}
\renewcommand\subsection{\@startsection{subsection}{2}{\z@}%
{12\p@ \@plus 6\p@ \@minus 3\p@}%
{3\p@ \@plus 6\p@ \@minus 3\p@}%
{\bfseries\boldmath}}
\renewcommand\subsubsection{\@startsection{subsubsection}{3}{\z@}%
{12\p@ \@plus 6\p@ \@minus 3\p@}%
{\p@}%
{\bfseries\boldmath}}
\theoremstyle{plain}
\newtheorem{theorem}{Theorem}[section]
\newtheorem{lemma}{Lemma}[section]
\newtheorem{corollary}{Corollary}[section]
\newtheorem{remark}{Remark}[section]
\newtheorem{claim}{Claim}
\theoremstyle{nonumberplain}
\newtheorem{proof}{\bf Proof.}
\title{\bf Toughness and normalized Laplacian eigenvalues of graphs}
\author{{Xueyi Huang$^{a,b}$, Kinkar Chandra Das$^{b,}$\footnote{Corresponding author.}\setcounter{footnote}{-1}\footnote{\emph{E-mail address:} huangxymath@163.com (X. Huang), kinkardas2003@gmail.com (K. C. Das), zslkqds@163.com (S. Zhu).} \ and Shunlai Zhu$^a$}\\[2mm]
\small $^a$School of Mathematics, East China University of Science and Technology, \\
\small  Shanghai 200237, P.R. China\\
\small $^b$Department of Mathematics, Sungkyunkwan University,\\
\small Suwon 16419, Republic of Korea
}
\date{}
\begin{document}
\maketitle

\begin{abstract}
Given a connected graph $G$, the toughness $\tau_G$ is defined as the minimum value of the ratio $|S|/\omega_{G-S}$, where $S$ ranges over all vertex cut sets of $G$, and $\omega_{G-S}$ is the number of connected components in the subgraph $G-S$  obtained by deleting all vertices of $S$ from $G$. In this paper, we provide a lower bound for the toughness $\tau_G$ in terms of  the maximum degree, minimum degree  and normalized Laplacian eigenvalues of $G$. This can be viewed as a slight generalization of Brouwer's toughness conjecture, which was confirmed by Gu (2021). Furthermore, we give a characterization of those graphs attaining the two lower bounds regarding toughness and Laplacian eigenvalues provided by Gu and Haemers (2022).

\par\vspace{2mm}

\noindent{\bfseries Keywords:} Toughness, Normalized Laplacian eigenvalue, Algebraic connectivity.
\par\vspace{1mm}

\noindent{\bfseries 2010 MSC:} 05C50, 05C42
\end{abstract}

\section{Introduction}
Let $G$ be an undirected simple graph with vertex set $V_{G}$ ($|V_{G}|=n$).
For any $v\in V_{G}$, we denote by $d_v$  the \textit{degree} of $v$, that is, the number of vertices adjacent to $v$ in $G$.  In particular, let $\Delta_{G}$ and $\delta_{G}$ denote the \textit{maximum degree} and \textit{minimum degree} of vertices of $G$, respectively. For any subset $S$ of $V_{G}$, we denote by $\nu_S=\sum_{v\in S}d_v$ the \textit{volume} of $S$ in $G$, $G[S]$ the subgraph of $G$ induced by $S$, and $G-S$ the subgraph of  $G$ induced by $V_{G}\setminus S$. For $X,Y\subseteq V_{G}$, we denote by $e_{X,Y}$ the number of edges between $X$ and  $Y$ (counting each edge with both ends in $X\cap Y$ twice), and  $e_X$ the number of edges of which the both ends are contained in $X$. Clearly, $e_{X, X} = 2e_X$. Also, the \textit{join} of two graphs  $G$ and $H$, denoted by $G\vee H$, is the graph obtained from $G\cup H$ by connecting each vertex of $G$ to all vertices of  $H$.

The \textit{adjacency matrix}  of  $G$ is the $(0,1)$-matrix $A_{G}=(a_{uv})_{u,v\in V_{G}}$ with $a_{uv}=1$ if and only if $u,v$ are distinct and  adjacent. Let $D_{G}=\mathrm{diag}\{d_v:v\in V_{G}\}$ denote the diagonal degree matrix of $G$. Then we call $L_{G}=D_{G}-A_{G}$ and $\mathcal{L}_{G}=D_{G}^{-1/2}L_{G}D_{G}^{-1/2}$ the  \textit{Laplacian matrix} and \textit{normalized Laplacian matrix}  of  $G$, respectively. Here we always take $(D_{G}^{-1/2})_{vv}=0$ if $v$ is an isolated vertex of $G$. The eigenvalues of $A_{G}$, $L_{G}$ and $\mathcal{L}_{G}$ are called the \textit{adjacency eigenvalues}, \textit{Laplacian eigenvalues}  and \textit{normalized Laplacian eigenvalues} of $G$, and denoted by  $\lambda_1^G\geq \lambda_2^G\geq \cdots \geq \lambda_n^G$, $\mu_1^G\geq \mu_2^G\geq \cdots \geq \mu_n^G$ and $\xi_1^G\geq \xi_2^G\geq \cdots \geq \xi_n^G$, respectively. It is easy to see that $\mu_n^G=\xi_n^G=0$, and $\xi_1^G\leq 2$ with equality  if and only if $G$ contains at least one nontrivial bipartite component \cite{C}. Additionally, $\mu_1^G$ and $\mu_{n-1}^G$ are called the \textit{Laplacian spectral radius} and \textit{algebraic connectivity} of $G$, respectively.

In 1973, Chv\'atal \cite{Chv} proposed the  concept of graph toughness. For a non-complete connected graph $G$, the  \textit{toughness} $\tau_G$  is defined as $\tau_G = \min_{ S\subseteq V_{G},\, \omega_{G-S}\geq 2}{|S|/\omega_{G-S}}$,
where  $\omega_{G-S}$ represents the number of connected components in $G-S$. For convention, we define the  toughness of a complete graph $K_n$ as $\tau_{K_n}=\infty$.  In the original paper \cite{Chv}, Chv\'atal proved that $\tau_G\geq 1$ if $G$ is a hamiltonian graph.  Over the past decades, toughness has been shown to be closely related to cycles, matchings, factors, spanning trees, and various structural parameters  of graphs \cite{BBS}.

In 1995, Alon \cite{A} first introduced  adjacency eigenvalues to the study of graph toughness. He proved that $\tau_G>(d^2/(d\lambda+\lambda^2)-1)/3$ if $G$ is a connected $d$-regular graph, where $\lambda=\max\{|\lambda_2^G|,|\lambda_n^G|\}$. Almost at the same time, Brouwer \cite{B1,B2} showed that every connected $d$-regular graph $G$ satisfies  $\tau_G>d/\lambda-2$, and further conjectured  that $\tau_G\geq d/\lambda-1$, which is known as Brouwer's toughness conjecture.  After then, some efforts of several researchers were made for solving this conjecture \cite{CG,CW,G0}. Very recently, Gu \cite{G1}  confirmed Brouwer's toughness conjecture.

In this paper, inspired by the work of Gu  \cite{G1},  we first consider to provide a lower bound for the toughness of general graphs  in terms of the maximum degree, minimum degree and normalized Laplacian eigenvalues.
\begin{theorem}\label{thm:Main1}
Let $G$ be a connected graph on $n$ vertices with $\Delta_{G}=\Delta$ and $\delta_{G}=\delta$. Then
$$
\tau_G\geq \max\left\{\frac{1}{\Delta}, \frac{\Delta+\delta}{\Delta n},\frac{\delta(\xi+1)}{\Delta\xi}-2\right\},
$$
where $\xi=\max\{|1-\xi_1^G|,|1-\xi_{n-1}^G|\}$.
\end{theorem}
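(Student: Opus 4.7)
The plan is to fix a vertex cut $S$ of $G$ achieving the toughness, and to derive each of the three candidate lower bounds separately; their maximum yields the theorem. Throughout, set $\omega := \omega_{G-S} \geq 2$, let $V_1,\dots,V_\omega$ denote the components of $G - S$, and write $n_i := |V_i|$, $\nu_i := \nu_{V_i}$, and $\nu := \nu_{V_G}$ for the total degree-sum.

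The first two bounds are elementary. Because $G$ is connected, each component $V_i$ must share at least one edge with $S$, whence
\[
\omega \le \sum_{i=1}^{\omega} e_{V_i, S} \le \nu_S \le |S|\Delta,
\]
giving $\tau_G \geq 1/\Delta$. For the bound $\tau_G \geq (\Delta+\delta)/(\Delta n)$, I plan to combine this with the counting inequality $n \geq |S| + \omega$ and the degree-sum estimates $\nu_S \le |S|\Delta$, $\nu_{V_G \setminus S} \ge (n-|S|)\delta$, $\nu \le n\Delta$; examining the smallest component (each of whose vertices contributes at least $\delta - (n_i - 1)$ edges to $S$ when positive) and rearranging linearly produces the required $|S|\Delta n \geq \omega(\Delta+\delta)$.

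The main step, and the spectral heart of the result, is the bound $\tau_G \geq \delta(\xi+1)/(\Delta\xi) - 2$. The key tool is the normalized-Laplacian expander mixing lemma: for any $X, Y \subseteq V_G$,
\[
\left| e_{X,Y} - \frac{\nu_X \nu_Y}{\nu} \right| \le \xi \cdot \frac{\sqrt{\nu_X(\nu - \nu_X)\,\nu_Y(\nu - \nu_Y)}}{\nu},
\]
obtained by decomposing $I - \mathcal{L}_G = D_G^{-1/2} A_G D_G^{-1/2}$ in its spectral basis and projecting $D_G^{1/2}\chi_X$, $D_G^{1/2}\chi_Y$ orthogonally to the Perron direction $D_G^{1/2}\mathbf{1}/\sqrt{\nu}$. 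I intend to apply this with $X = V_i$ and $Y = V_G \setminus (S \cup V_i)$: since no edges cross distinct components of $G-S$, $e_{X,Y}=0$, which collapses the inequality to
\[
\sqrt{\nu_i(\nu_T - \nu_i)} \le \xi \sqrt{(\nu - \nu_i)(\nu_S + \nu_i)},
\]
where $\nu_T := \nu - \nu_S = \sum_j \nu_j$. Aggregating this over all $i$ (or applying it to a judicious component, such as one of minimum volume) and invoking the elementary bounds $\nu_i \ge n_i \delta \ge \delta$, $\nu_S \le |S|\Delta$, and $\nu \le n\Delta$ together with Cauchy--Schwarz / AM--GM manipulations, I will rearrange into the claimed form.

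The main obstacle will be the bookkeeping in this spectral step: picking the right component or weighting so that the factors $\nu_i$, $\nu - \nu_i$, $\nu_S + \nu_i$ collapse, through the elementary degree-sum inequalities, exactly into the shape $\delta(\xi+1)/(\Delta\xi) - 2$. As a built-in sanity check, the argument must specialize in the $d$-regular case (where $\Delta = \delta = d$ and $\xi = \lambda/d$ with $\lambda = \max(|\lambda_2^G|,|\lambda_n^G|)$) to Brouwer's bound $\tau_G \ge d/\lambda - 1$ confirmed by Gu (2021); requiring this specialization to emerge cleanly will pin down the correct index choice or aggregation weights.
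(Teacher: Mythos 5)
Your first bound coincides with the paper's argument, and your second can be completed along essentially the same lines as the paper (which cites $\alpha_G\le n\Delta/(\Delta+\delta)$ for the independent set obtained by taking one vertex per component, then uses $|S|\ge 1$); a direct count such as $\delta\omega\le e_{I,V_G\setminus I}\le \Delta(n-\omega)$ does the job, though the clause about each vertex contributing at least $\delta-(n_i-1)$ edges to $S$ is not the computation you actually need.

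The spectral bound is where there is a genuine gap, and it is not ``bookkeeping.'' Applying the mixing lemma to a single component $V_i$ against the union of the remaining components gives, after the same algebra as in the paper, only $\nu_S\ge(1/\xi-1)\min\{\nu_{V_i},\nu_{U\setminus V_i}\}$, and no choice of $i$ makes this minimum as large as $\delta\omega$: picking a small component gives $\nu_{V_i}\ge\delta n_i$ with possibly $n_i=1$, so $\tau=|S|/\omega\ge(1/\xi-1)\delta/(\Delta\omega)$, off by a factor of $\omega$; picking the largest component when $G-S$ is one giant component plus $\omega-1$ singletons loses a factor of $2$ at best. Aggregating over $i$ fails for the same unbalanced configurations. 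The paper (following Gu's proof of Brouwer's conjecture) needs two additional ideas you do not have: (i) a preliminary dichotomy on $|U|=|V_{G-S}|$, where the case $|U|\le 2n\Delta\xi/(\delta(\xi+1))$ is settled directly by the normalized-Laplacian independence bound $\omega\le\alpha_G\le 2m\xi/(\delta(\xi+1))$ (Lemma \ref{lem:Independent}), since then $|S|=n-|U|$ is already large enough; and (ii) in the remaining case, where one deduces $|U|\ge 2\omega+1$, a \emph{repartition of the components} into two unions $R$ and $T$ with $e_{R,T}=0$ and $|R|,|T|\ge\omega$, which requires the combinatorial subset-sum Lemma \ref{lem:sum} together with a separate treatment of the case where all but the largest component are singletons. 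Only the guarantee $|R|\ge\omega$, hence $\nu_R\ge\delta\omega$, turns the mixing inequality into $\tau\ge(1/\xi-1)\delta/\Delta$; without producing such a set, your sanity-check specialization to Brouwer's bound $d/\lambda-1$ would not in fact emerge.
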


If $G$ is $d$-regular, then $\Delta=\delta=d$ and $\xi=\lambda/d$. By Theorem \ref{thm:Main1},
$$\tau_G\geq \frac{\delta(\xi+1)}{\Delta\xi}-2=\frac{1}{\xi}-1=\frac{d}{\lambda}-1,$$
which coincides with the result of Brouwer's toughness conjecture.  This suggests that Theorem \ref{thm:Main1} can be viewed as a slight generalization of Brouwer's toughness conjecture.

Recently,  Gu and Haemers \cite{GH} presented two lower bounds for the toughness of a graph by using Laplacian eigenvalues.

\begin{theorem}[\cite{GH}]\label{thm:toughness}
Let $G$ be a connected graph on $n$ vertices with $\delta_{G}=\delta$. Then
\begin{equation}\label{equ:1-1}
\tau_G\geq \frac{\mu_1^G\mu_{n-1}^G}{n(\mu_1^G-\delta)}
\end{equation}
and
\begin{equation}\label{equ:1-2}
\tau_G\geq \frac{\mu_{n-1}^G}{\mu_1^G-\mu_{n-1}^G}.
\end{equation}
\end{theorem}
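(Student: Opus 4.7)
The plan is to approach both inequalities via the Laplacian Rayleigh principle
\[
\mu_{n-1}^G \|x\|^2 \le x^T L_G x \le \mu_1^G \|x\|^2,\qquad x\in\mathbf{1}^{\perp},
\]
combined with the structural observation that the components $V_1,\ldots,V_\omega$ of $G-S$ are pairwise non-adjacent. If $G$ is complete there is nothing to prove; otherwise, I fix a minimum-toughness vertex cut $S$, so that $s=|S|$, $\omega=\omega_{G-S}\ge 2$, and $\tau_G = s/\omega$. Writing $v_i=|V_i|$ and $e_i=e(V_i,S)$, we have $e_i=\nu_{V_i}-2e_{V_i}$ and $\sum_i e_i = e(V\setminus S,S)$. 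Two instances of the Rayleigh inequality will be fundamental: applying it to $x=\chi_S-(s/n)\mathbf{1}$ recovers the classical Laplacian mixing estimate
\[
\mu_{n-1}^G\,\tfrac{s(n-s)}{n}\le e(S,V\setminus S)\le \mu_1^G\,\tfrac{s(n-s)}{n},
\]
and applying it to $x=\chi_{V_i}-(v_i/n)\mathbf{1}$ gives the per-component analogue $\mu_{n-1}^G v_i(n-v_i)/n\le e_i\le \mu_1^G v_i(n-v_i)/n$.

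To prove \eqref{equ:1-1} I would first combine the \emph{upper} half of the global mixing lemma with the identity $e(S,V\setminus S)=\nu_{V\setminus S}-2e_{V\setminus S}\ge \delta(n-s)-2e_{V\setminus S}$ to obtain the lower bound $2e_{V\setminus S}\ge (n-s)(n\delta-\mu_1^G s)/n$. Next, the per-component upper bounds, summed over $i$, furnish an upper bound on $e(V\setminus S,S)$ in terms of $\sum_i v_i^2$; equating this sum with $\nu_{V\setminus S}-2e_{V\setminus S}$, and then using $\sum v_i=n-s$ together with the convexity estimate $\sum v_i^2\ge (n-s)^2/\omega$, should eliminate the unknown component sizes and collapse the chain to $\tau_G\, n(\mu_1^G-\delta)\ge \mu_1^G\mu_{n-1}^G$.

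For \eqref{equ:1-2} I would apply Rayleigh to a three-valued test vector: for any bipartition of $V\setminus S$ into unions of components $Y$ and $Z$ with $|Y|=y$ and $|Z|=z=n-s-y$, set $x=z\chi_Y-y\chi_Z$. Since $x\perp\mathbf{1}$, $\|x\|^2=yz(n-s)$, and $e(Y,Z)=0$, the quadratic form collapses to $x^TL_G x=z^2 e(Y,S)+y^2 e(Z,S)$, which yields
\[
\mu_{n-1}^G\, yz(n-s)\le z^2 e(Y,S)+y^2 e(Z,S)\le \mu_1^G\, yz(n-s).
\]
Taking $Y$ to be a largest component so that $y\le n-s-(\omega-1)$ and $z\ge \omega-1$, and bounding $e(Y,S)+e(Z,S)\le \mu_1^G s(n-s)/n$ via the global mixing lemma, the resulting system should rearrange to $s\mu_1^G\ge (s+\omega)\mu_{n-1}^G$, which is equivalent to \eqref{equ:1-2}.

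The hardest step will be the final algebraic elimination: in both inequalities the unknown component sizes $v_i$ must be removed cleanly in favor of the invariants $s$, $\omega$, $n$, $\delta$, $\mu_1^G$, $\mu_{n-1}^G$, and in \eqref{equ:1-2} the precise constant $\omega$ (rather than a weaker $\omega-1$) demands a careful choice of the partition $\{Y,Z\}$. If a single largest-component bipartition loses a unit, I would fall back on a symmetric average over all pairs of components, or on Haemers' interlacing theorem applied to the symmetric quotient matrix $P^TL_G P$ associated with the partition $\{V_1,\ldots,V_\omega,S\}$, whose $\omega+1$ eigenvalues interlace the Laplacian spectrum and provide an $\omega$-term sum bound of exactly the required form.
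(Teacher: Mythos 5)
This theorem is quoted from \cite{GH} rather than reproved in the paper, but the shape of the intended proof is visible from the lemmas the paper assembles and from how it re-enters the argument in the proof of Theorem~\ref{thm:Main2}: inequality \eqref{equ:1-1} is the product of the two separate bounds $|S|\geq \kappa_G\geq \mu_{n-1}^G$ (Fiedler) and $\omega_{G-S}\leq \alpha_G\leq n(\mu_1^G-\delta)/\mu_1^G$ (Lemma~\ref{lem:Independent2}, applied to the independent set obtained by taking \emph{one vertex from each component} of $G-S$), while \eqref{equ:1-2} follows from Lemma~\ref{lem:DisjointSets} applied to a \emph{balanced} bipartition of the components. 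Measured against this, your sketch of \eqref{equ:1-1} has a genuine gap: $\mu_{n-1}^G$ never enters your argument, yet it appears in the conclusion. Both estimates you invoke --- the upper half of the global mixing inequality and the summed per-component upper bounds --- involve only $\mu_1^G$, and both bound the same quantity $e(S,V\setminus S)=\nu_{V\setminus S}-2e_{V\setminus S}$ from above, so there is nothing to ``equate'': you obtain only a \emph{lower} bound on $e_{V\setminus S}$, while the second step would need an upper bound, and the internal-edge terms $e_{V_i}$ (which can be as large as $\binom{v_i}{2}$) cannot be eliminated. Working with whole components is exactly what breaks the argument; passing to a transversal of the components kills the $e_{V_i}$ terms and reduces the $\omega$-bound to the standard independence-number estimate, and the $\mu_{n-1}^G$ factor must be supplied separately by $|S|\geq\kappa_G\geq\mu_{n-1}^G$.

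For \eqref{equ:1-2} your test vector $x=z\chi_Y-y\chi_Z$, the identity $x^TL_Gx=z^2e(Y,S)+y^2e(Z,S)$, and the named fallback to interlacing on the quotient matrix of $\{Y,Z,S\}$ are all sound --- the last is precisely how Lemma~\ref{lem:DisjointSets} is proved in \cite{GL}, and a single Rayleigh quotient combined with $e(Y,S)+e(Z,S)\leq\mu_1^G s(n-s)/n$ really is insufficient (it yields only $\mu_{n-1}^G\leq s$ or a constant-factor-weaker bound, since one needs both $\theta_1\leq\mu_1^G$ and $\theta_2\geq\mu_{n-1}^G$ for the two nonzero quotient eigenvalues). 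The missing idea is not the interlacing but the choice of bipartition: Lemma~\ref{lem:DisjointSets} bounds $|S|$ only in terms of the \emph{smaller} side, and with $Y$ a single largest component you cannot control which side that is --- if the large component is the smaller side you learn essentially nothing. One must instead distribute the $\omega$ components into two groups so that the smaller group still contains at least $\omega/2$ vertices; then $|S|\geq\frac{2\mu_{n-1}^G}{\mu_1^G-\mu_{n-1}^G}\cdot\frac{\omega}{2}$ gives \eqref{equ:1-2} with the correct constant. So this half is recoverable, but the balanced-bipartition device is an essential step, not an optional refinement.
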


In the  second part of this paper, we focus on giving  a characterization of  those graphs attaining the lower bounds of Theorem \ref{thm:toughness}.

\begin{theorem}\label{thm:Main2}
Let $G$ be a connected graph of order $n$ with $\delta_{G}=\delta$. Then each equality in \eqref{equ:1-1} and \eqref{equ:1-2} holds if and only if $G\cong H\vee (n-\delta)K_1$, where $1\leq \delta\leq n-2$ and $H$ is any graph of order $\delta$ with $\mu_{\delta-1}^H\geq 2\delta-n$.
\end{theorem}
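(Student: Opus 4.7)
The plan is to verify both directions of the equivalence.

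For the ``if'' direction, let $G = H \vee (n-\delta)K_1$ with $|V(H)| = \delta$ and $\mu_{\delta-1}^H \geq 2\delta - n$. Using the standard formula for the Laplacian spectrum of a join (Merris),
\[
\mathrm{Spec}(L_G) = \{0,\, n\} \cup \{\mu_i^H + (n-\delta) : 1 \leq i \leq \delta-1\} \cup \{\delta^{(n-\delta-1)}\}.
\]
The hypothesis $\mu_{\delta-1}^H \geq 2\delta - n$ makes every shifted eigenvalue at least $\delta$, so $\mu_1^G = n$ and $\mu_{n-1}^G = \delta$. Each of the $n-\delta$ join vertices has degree exactly $\delta$, so $\delta_G = \delta$. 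Removing $V(H)$ decomposes $G$ into $n-\delta$ singletons, giving $\tau_G \leq \delta/(n-\delta)$; substituting $\mu_1^G = n$ and $\mu_{n-1}^G = \delta$ into \eqref{equ:1-1} and \eqref{equ:1-2} each yields $\delta/(n-\delta)$, and Theorem \ref{thm:toughness} forces equality throughout.

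For the ``only if'' direction, suppose both equalities hold. Equating the right-hand sides of \eqref{equ:1-1} and \eqref{equ:1-2} and cancelling $\mu_{n-1}^G > 0$ (valid by connectedness) yields
\[
\mu_1^G\bigl(\mu_1^G - \mu_{n-1}^G\bigr) = n\bigl(\mu_1^G - \delta\bigr),
\]
equivalently $\mu_{n-1}^G = \mu_1^G - n + n\delta/\mu_1^G$. Together with the universal bounds $\mu_1^G \leq n$, the Fiedler bound $\mu_{n-1}^G \leq \delta$, and Grone's inequality $\mu_1^G \geq \Delta_G + 1 > \delta$, this confines $\mu_1^G$ to $(\delta, n]$. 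The next step is to conclude $\mu_1^G = n$ (and thus $\mu_{n-1}^G = \delta$ and $\tau_G = \delta/(n-\delta)$). I would do so by revisiting the Rayleigh-quotient proofs of \eqref{equ:1-1} and \eqref{equ:1-2}: both test the Laplacian on a vector supported on the components $C_1, \ldots, C_{\omega_{G-S}}$ of $G - S$ for a tough cut $S$, and equality in the Rayleigh bound forces that vector to be a Laplacian eigenvector for $\mu_1^G$ or $\mu_{n-1}^G$. This rules out the spurious second root $\mu_1^G = n\delta/(n-\delta)$ of the quadratic obtained by substituting $\tau_G = \delta/(n-\delta)$, which the algebraic identity alone admits.

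With $\mu_1^G = n$ in hand, $\overline{G}$ is disconnected. The eigenvector structure extracted from the Rayleigh-equality conditions, combined with the constraints $|S|/\omega_{G-S} = \delta/(n-\delta)$ and $\delta_G = \delta$, forces $|S| = \delta$, $\omega_{G-S} = n-\delta$, every component of $G-S$ a singleton, and every such vertex adjacent to every vertex of $S$. Hence $V(G) \setminus S$ is an independent set of size $n-\delta$ completely joined to $S$, so $G \cong H \vee (n-\delta)K_1$ with $H := G[S]$ of order $\delta$. Running the join-spectrum computation of the first paragraph in reverse, $\mu_{n-1}^G = \delta$ translates back into $\mu_{\delta-1}^H \geq 2\delta - n$, completing the characterization. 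The principal obstacle is the previous paragraph: excluding the spurious quadratic root (Grone's bound alone suffices only when $n > \delta(\delta+1)$), and then extracting the precise tough-cut structure, both require carefully unpacking the equality conditions of the Rayleigh-type proofs of Theorem \ref{thm:toughness} alongside Grone's lower bound on $\mu_1^G$.
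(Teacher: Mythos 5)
Your ``if'' direction is essentially the paper's: the join-spectrum computation gives $\mu_1^G=n$ and $\mu_{n-1}^G=\delta$, the cut $S=V(H)$ gives $\tau_G\le\delta/(n-\delta)$, and Theorem \ref{thm:toughness} closes the gap. (One small point you gloss over: $\delta_G=\delta$ needs $d_H(v)\ge 2\delta-n$ for all $v$, which follows from $\mu_{\delta-1}^H\ge 2\delta-n$ via Fiedler's inequality $\mu_{\delta-1}^H\le\kappa_H\le\delta_H$.)

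The ``only if'' direction, however, has a genuine gap: the two steps you yourself identify as ``the principal obstacle'' --- ruling out the spurious root of $\mu_1^G(\mu_1^G-\mu_{n-1}^G)=n(\mu_1^G-\delta)$ so as to get $\mu_1^G=n$, and then forcing every vertex outside $S$ to be adjacent to all of $S$ --- are exactly the substance of the proof, and you only gesture at them (``revisiting the Rayleigh-quotient proofs\ldots forces that vector to be a Laplacian eigenvector''). No such Rayleigh-equality analysis is carried out, and it is not routine: the bounds \eqref{equ:1-1} and \eqref{equ:1-2} in \cite{GH} are not proved by a single Rayleigh quotient whose equality case hands you an eigenvector. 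The paper's route avoids this entirely. From the equality analysis of \cite{GH} one extracts $|S|=\kappa_G=\mu_{n-1}^G$ (for \eqref{equ:1-1}), respectively $\mu_{n-1}^G\ge\delta\ge\kappa_G$ hence $\mu_{n-1}^G=\kappa_G$ (for \eqref{equ:1-2}, after Lemma \ref{lem:DisjointSets} forces all components of $G-S$ to be singletons). The key tool you are missing is then Lemma \ref{lem:AlgebraicConnectivity} (Kirkland--Molitierno--Neumann--Shader): $\mu_{n-1}^G=\kappa_G$ already forces $G=G[S]\vee(G-S)$ for any minimum cut $S$, which yields $\mu_1^G=n$ for free via Lemma \ref{lem:LaplacianJoin} and delivers the complete join to $S$ without any eigenvector bookkeeping; the semiregularity clause of Lemma \ref{lem:Independent2} then collapses the components of $G-S$ to singletons in the \eqref{equ:1-1} case. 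Note also that the paper proves the stronger assertion that \emph{each} equality separately characterizes the family $H\vee(n-\delta)K_1$, whereas your argument assumes both equalities simultaneously from the outset; if you intend the weaker reading you should say so, but as written your plan would need to be run twice, once per equality, and in each run the missing steps above must actually be supplied.
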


According to \eqref{equ:1-2} and Theorem \ref{thm:Main2}, we obtain an upper bound for the algebraic connectivity of a graph in terms of the  toughness and Laplacian spectral radius, and characterize the extremal graphs.
\begin{corollary} Let $G$ be a connected graph on $n$ vertices with Laplacian spectral radius $\mu_1^G$ and toughness $\tau_G$. Then
 $$\mu_{n-1}^G\leq \frac{\tau_G}{\tau_G+1}\,\mu_1^G$$
 with equality if and only if $G\cong H\vee (n-\delta)K_1$, where $1\leq \delta\leq n-2$ and $H$ is any graph of order $\delta$ with $\mu_{\delta-1}^H\geq 2\delta-n$.
\end{corollary}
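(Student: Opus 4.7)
The plan is to derive the inequality as a direct algebraic consequence of \eqref{equ:1-2} in Theorem \ref{thm:toughness}, and to read off the equality case from Theorem \ref{thm:Main2}; there is no substantive new argument required beyond a short case split.

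First, I would dispose of the trivial case $G \cong K_n$: here $\tau_G = \infty$ by convention and $\mu_1^G = \mu_{n-1}^G = n$, so (interpreting $\tau_G/(\tau_G+1)$ as the limit $1$) the claimed inequality reads $n \leq n$ and holds with equality. Since $K_n$ is not of the form $H \vee (n-\delta)K_1$ with $\delta \leq n-2$, it is correctly excluded from the stated equality characterization.

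For $G \not\cong K_n$ I would first verify that $\mu_1^G > \mu_{n-1}^G$. Indeed, if all nonzero Laplacian eigenvalues of a connected graph coincide at some $\mu > 0$, then $L_G = \mu\bigl(I - (1/n)J\bigr)$, and integrality of the off-diagonal entries of $L_G$ (each equal to $-\mu/n$) together with connectedness forces $\mu = n$, i.e.\ $G \cong K_n$, a contradiction. Hence $\mu_1^G - \mu_{n-1}^G > 0$, and multiplying \eqref{equ:1-2} through by this positive quantity and rearranging gives
$$
\tau_G\bigl(\mu_1^G - \mu_{n-1}^G\bigr) \geq \mu_{n-1}^G \iff \tau_G\,\mu_1^G \geq (\tau_G+1)\,\mu_{n-1}^G \iff \mu_{n-1}^G \leq \frac{\tau_G}{\tau_G+1}\,\mu_1^G,
$$
which is the desired bound. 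Each step in this chain is an equivalence, so equality in the displayed bound is equivalent to equality in \eqref{equ:1-2}. Theorem \ref{thm:Main2} then characterizes the extremal graphs as $G \cong H \vee (n-\delta)K_1$ with $1 \leq \delta \leq n-2$ and $H$ any graph of order $\delta$ satisfying $\mu_{\delta-1}^H \geq 2\delta - n$, completing the proof. The only mildly subtle point — and it is routine rather than a genuine obstacle — is the dichotomy ruling out $\mu_1^G = \mu_{n-1}^G$ outside the complete-graph case.
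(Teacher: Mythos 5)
Your proposal is correct and follows exactly the route the paper intends: the inequality is the algebraic rearrangement of \eqref{equ:1-2}, and the equality case is read off from Theorem \ref{thm:Main2}. The extra care you take with the $K_n$ case and with verifying $\mu_1^G>\mu_{n-1}^G$ for non-complete connected graphs is sound and only makes the (otherwise immediate) deduction more explicit.
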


\section{Preliminaries}
In this section, we review some basic  lemmas for latter use.
\begin{lemma}[\cite{BH,G1}]\label{lem:sum}
Let $n_1,\ldots,n_p$ be positive integers such that $\sum_{i=1}^p n_i\leq 2p-1$. Then for every integer $\ell$ with $0\leq \ell\leq \sum_{i=1}^p n_i$, there exists some $\mathcal{I}\subset\{1,\ldots,p\}$ such that $\sum_{i\in\mathcal{I}}n_i=\ell$.
\end{lemma}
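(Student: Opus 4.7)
The plan is to prove that the collection of subset sums of $\{n_1,\dots,n_p\}$ fills the entire integer interval $[0,s_p]$, where $s_p=\sum_i n_i$. First I would reorder the indices so that $n_1\leq n_2\leq\cdots\leq n_p$; this is harmless since a permutation does not affect the set of subset sums. Writing $s_k=\sum_{i=1}^k n_i$, the main step is an induction on $k$ showing that the set of subset sums of $\{n_1,\dots,n_k\}$ is exactly $\{0,1,\dots,s_k\}$. The inductive step splits subsets of $\{n_1,\dots,n_{k+1}\}$ into those omitting and those containing $n_{k+1}$, producing $\{0,\dots,s_k\}\cup\{n_{k+1},\dots,s_k+n_{k+1}\}$; this union is the single interval $\{0,\dots,s_{k+1}\}$ precisely when $n_{k+1}\leq s_k+1$.

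The whole argument therefore reduces to the following key claim: under the hypotheses, $n_k\leq s_{k-1}+1$ holds for every $k$. I would prove this by contradiction: suppose $n_k\geq s_{k-1}+2$ for some $k$. Since the sequence is non-decreasing, $n_j\geq n_k\geq s_{k-1}+2$ for all $j\geq k$, and since each term is at least $1$ we have $s_{k-1}\geq k-1$. Combining these,
$$
s_p \;\geq\; s_{k-1}+(p-k+1)(s_{k-1}+2)\;\geq\;(k-1)+(p-k+1)(k+1).
$$
A short algebraic check shows the right-hand side strictly exceeds $2p-1$ for every $2\leq k\leq p$; the boundary case $k=1$ is even more direct, since $n_1\geq 2$ forces $s_p\geq 2p$. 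Either way, this contradicts the hypothesis $s_p\leq 2p-1$, which establishes the claim and hence the lemma for $\ell=0,1,\dots,s_p$.

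The main obstacle I anticipate is the key claim itself, not the induction that consumes it. The threshold $2p-1$ is exactly tight: for instance, when $n_1=\cdots=n_{p-1}=1$ and $n_p=p$ the hypothesis barely holds and the inequality $n_p\leq s_{p-1}+1$ becomes an equality. Consequently, the algebraic estimate above must be carried out without any slack lost in the edge cases $k=1$ and $k=p$, and the non-decreasing ordering is essential: it is what upgrades the pointwise bound $n_k\geq s_{k-1}+2$ into the uniform lower bound on all of $n_k,n_{k+1},\dots,n_p$ that drives the contradiction.
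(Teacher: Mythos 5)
The paper does not prove this lemma itself---it is quoted from the cited references \cite{BH,G1}---so there is no in-text proof to compare against; your argument is a correct, self-contained proof and follows the same standard route as those sources (sort non-decreasingly, show $n_k\le s_{k-1}+1$ for all $k$, then induct to get that the subset sums fill $\{0,\dots,s_k\}$). The key estimate checks out: with $s_{k-1}\ge k-1$ one gets $s_p\ge(k-1)+(p-k+1)(k+1)=2p-1+\bigl(-k^2+(p+1)k-p+1\bigr)\ge 2p$, since the concave quadratic equals $1$ at both $k=1$ and $k=p$.
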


Recall that  $\nu_S(S)=\sum_{v\in S}d_v$  denotes the volume of $S$ ($S\subseteq V_G$) in $G$.
\begin{lemma}[Irregular Expander Mixing Lemma, \cite{C1}]\label{lem:ExpanderMixing}
Let $G$ be a graph on $n$ vertices. For any two subsets $X$ and $Y$ of $V=V_{G}$, we have
$$
\left|e_{X,Y}-\frac{\nu_X\nu_Y}{\nu_V}\right|\leq \xi\cdot \sqrt{\nu_X\nu_Y\left(1-\frac{\nu_X}{\nu_V}\right)\left(1-\frac{\nu_Y}{\nu_V}\right)},
$$
where $\xi=\max\{|1-\xi_1^G|,|1-\xi_{n-1}^G|\}$. In particular,
$$
\left|2e_X-\frac{\nu_X^2}{\nu_V}\right|\leq \xi\cdot \nu_X\left(1-\frac{\nu_X}{\nu_V}\right).
$$
\end{lemma}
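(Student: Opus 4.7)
The plan is to realize $e_{X,Y}$ as a bilinear form in the matrix $\mathcal{N}_G:=D_G^{-1/2}A_GD_G^{-1/2}=I-\mathcal{L}_G$, decompose the relevant vectors into a Perron component and a transverse component, and then estimate the transverse contribution via the spectral gap of $\mathcal{N}_G$ on the orthogonal complement of its Perron eigenvector.

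First I would rewrite the quantity of interest. By the stated convention of double-counting edges with both ends in $X\cap Y$, one has $e_{X,Y}=\mathbf{1}_X^{\top}A_G\mathbf{1}_Y$. Setting $x:=D_G^{1/2}\mathbf{1}_X$ and $y:=D_G^{1/2}\mathbf{1}_Y$ turns this into $e_{X,Y}=x^{\top}\mathcal{N}_G\, y$, with $\|x\|^2=\nu_X$ and $\|y\|^2=\nu_Y$. Isolated vertices may be ignored without loss of generality thanks to the convention $(D_G^{-1/2})_{vv}=0$.

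Next I would isolate the principal direction. Since $L_G\mathbf{1}=0$, the vector $v_0:=D_G^{1/2}\mathbf{1}/\sqrt{\nu_V}$ is a unit eigenvector of $\mathcal{N}_G$ with eigenvalue $1$; moreover, on $v_0^{\perp}$ the eigenvalues of $\mathcal{N}_G$ are $\{1-\xi_i^G:1\leq i\leq n-1\}$, whose maximum modulus is attained at $i\in\{1,n-1\}$ and equals exactly $\xi$. Writing $x=\alpha v_0+x'$ and $y=\beta v_0+y'$ with $x',y'\perp v_0$, a direct computation yields
$$
\alpha=\frac{\nu_X}{\sqrt{\nu_V}},\quad \beta=\frac{\nu_Y}{\sqrt{\nu_V}},\quad \|x'\|^2=\nu_X\!\left(1-\frac{\nu_X}{\nu_V}\right),\quad \|y'\|^2=\nu_Y\!\left(1-\frac{\nu_Y}{\nu_V}\right).
$$
Because $\mathcal{N}_Gv_0=v_0$ while $x',y'\perp v_0$, the cross terms in $x^{\top}\mathcal{N}_G\, y$ vanish and we obtain
$$
e_{X,Y}=\alpha\beta+(x')^{\top}\mathcal{N}_G\,y'=\frac{\nu_X\nu_Y}{\nu_V}+(x')^{\top}\mathcal{N}_G\,y'.
$$

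Finally I would bound the error term. Cauchy--Schwarz combined with $\|\mathcal{N}_G\, y'\|\leq \xi\|y'\|$ on $v_0^{\perp}$ gives $|(x')^{\top}\mathcal{N}_G\, y'|\leq \xi\|x'\|\|y'\|$, and substituting the norms computed above produces the advertised inequality. The ``in particular'' clause is just the case $X=Y$: then $e_{X,X}=2e_X$ and the square root on the right collapses to $\nu_X(1-\nu_X/\nu_V)$. I expect the only mildly delicate point to be the identification of the restricted spectral norm of $\mathcal{N}_G$ with $\xi$ --- which amounts to observing that the shifted eigenvalues $1-\xi_i^G$ attain their extreme absolute values at $i=1$ or $i=n-1$ --- together with the bookkeeping around isolated vertices; both are routine.
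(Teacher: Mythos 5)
The paper does not prove this lemma at all --- it is quoted directly from Chung's work \cite{C1} --- so there is no internal proof to compare against. Your argument is the standard and correct derivation: the identities $e_{X,Y}=x^{\top}\mathcal{N}_G y$ with $x=D_G^{1/2}\mathbf{1}_X$, the Perron decomposition along $v_0=D_G^{1/2}\mathbf{1}/\sqrt{\nu_V}$, and the bound $\|\mathcal{N}_G|_{v_0^{\perp}}\|\leq\xi$ all check out, and the norms $\|x'\|^2=\nu_X(1-\nu_X/\nu_V)$ are computed correctly, so the proposal is a valid self-contained proof of the cited result.
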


Let $G$ be a graph. An \textit{independent set} of $G$ is a set of vertices which are pairwise non-adjacent. The \textit{independent number}  $\alpha_G$ is the maximum cardinality among all independent sets of $G$. The following three lemmas provide different kinds of upper bounds for $\alpha_G$.

\begin{lemma}[\cite{OST}, Theorem 3.2]\label{lem:Independent1}
Let $G$ be a graph on $n$ vertices with $\Delta_{G}=\Delta$ and $\delta_{G}=\delta$. Then
$$\alpha_G\leq \frac{n\Delta}{\Delta+\delta}.$$
\end{lemma}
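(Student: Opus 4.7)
The plan is to prove the bound via a simple double-counting argument on the edges leaving a maximum independent set.

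First, I would let $I \subseteq V_G$ be a maximum independent set, so $|I| = \alpha_G$. The key quantity is $e_{I, V_G \setminus I}$, the number of edges between $I$ and its complement. I would estimate this in two ways.

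For the lower bound, since $I$ is independent, no edge lies inside $I$, so every edge incident to a vertex of $I$ must go to $V_G \setminus I$. Hence
\[
e_{I, V_G \setminus I} \;=\; \sum_{v \in I} d_v \;\geq\; \alpha_G \, \delta,
\]
using that every vertex has degree at least $\delta$. For the upper bound, each of the $n - \alpha_G$ vertices in $V_G \setminus I$ sends at most $\Delta$ edges to $I$, so
\[
e_{I, V_G \setminus I} \;\leq\; (n - \alpha_G)\, \Delta.
\]

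Combining the two inequalities gives $\alpha_G \, \delta \leq (n - \alpha_G)\, \Delta$, which rearranges to the desired bound $\alpha_G \leq n\Delta/(\Delta + \delta)$. There is no real obstacle here; the argument is a one-line double count once one notices that independence forces all edges from $I$ to cross to $V_G \setminus I$, allowing the $\delta$-bound on the $I$-side and the $\Delta$-bound on the $(V_G \setminus I)$-side to be played against each other. Note that we do not even need $I$ to be maximum: any independent set of size $\alpha_G$ suffices for the estimate.
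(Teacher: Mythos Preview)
Your proof is correct: the double-counting of the edges between a maximum independent set $I$ and its complement, bounded below by $\alpha_G\,\delta$ and above by $(n-\alpha_G)\,\Delta$, immediately yields $\alpha_G\leq n\Delta/(\Delta+\delta)$. The paper itself does not prove this lemma but simply cites it from \cite{OST}; your argument is the standard one and almost certainly coincides with the proof given there.
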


\begin{lemma}\label{lem:Independent}
Let $G$ be a graph on $n$ vertices and  $m$ edges  with $\delta_{G}=\delta$. Then
$$
\alpha_G\leq \frac{2m\xi}{\delta(\xi+1)},
$$
where $\xi=\max\{|1-\xi_1^G|,|1-\xi_{n-1}^G|\}$.
\end{lemma}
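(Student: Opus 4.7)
The plan is to apply the $X=Y$ specialization of the Irregular Expander Mixing Lemma (\autoref{lem:ExpanderMixing}) to a maximum independent set of $G$. Let $X\subseteq V_G$ be such a set, so $|X|=\alpha_G$, and, because $X$ is independent, $e_X=0$. Since $\nu_{V_G}=\sum_{v\in V_G}d_v=2m$, the second inequality in \autoref{lem:ExpanderMixing} applied to $X$ collapses to
$$
\frac{\nu_X^{2}}{2m}\;\leq\;\xi\,\nu_X\!\left(1-\frac{\nu_X}{2m}\right).
$$

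Next I would divide through by $\nu_X$ (which is positive, since the bound is trivial unless $\alpha_G\geq 1$, and every $v\in X$ then contributes $d_v\geq \delta\geq 1$) and rearrange so as to isolate $\nu_X$ on one side. This gives the volume estimate
$$
\nu_X\;\leq\;\frac{2m\,\xi}{1+\xi}.
$$

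Finally, I would couple this with the trivial minimum-degree lower bound $\nu_X=\sum_{v\in X}d_v\geq \delta\,|X|=\delta\,\alpha_G$ to conclude
$$
\alpha_G\;\leq\;\frac{\nu_X}{\delta}\;\leq\;\frac{2m\,\xi}{\delta(1+\xi)},
$$
which is precisely the claimed inequality. There is no serious obstacle here: the whole proof is little more than unwinding the $X=Y$ case of \autoref{lem:ExpanderMixing} under the constraint $e_X=0$, together with the trivial degree bound that converts the volume of $X$ into its cardinality. The only points worth flagging are that the hypothesis $\delta\geq 1$ (implicit in the statement, to make the right-hand side meaningful) is used exactly in this last step, and that the bound is tight precisely when the applied form of the mixing lemma is tight on $X$.
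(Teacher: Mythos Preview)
Your proposal is correct and follows essentially the same route as the paper's own proof: apply the $X=Y$ case of \autoref{lem:ExpanderMixing} to a maximum independent set (where $e_X=0$), rearrange to get $\nu_X\le \frac{\xi}{1+\xi}\,\nu_V=\frac{2m\xi}{1+\xi}$, and then convert volume to cardinality via $\nu_X\ge \delta\,\alpha_G$.
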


\begin{proof}
Let $I$ be a maximum independent set of $G$, that is, $|I|=\alpha_G$. Then $e_I=0$, and by Lemma \ref{lem:ExpanderMixing},
$$
\frac{\nu_I^2}{\nu_V}\leq \xi\cdot \nu_I\left(1-\frac{\nu_I}{\nu_V}\right),
$$
which implies that
$$
\nu_I\leq \frac{\xi}{\xi+1}\cdot \nu_V.
$$
Combining this with  $\nu_I\geq \delta |I|=\delta\alpha_G$ and $\nu_V=2m$ yields that
$$
|I|\leq \frac{2m\xi}{\delta(\xi+1)}.
$$
The result follows.
\end{proof}

\begin{remark}
It is worth mentioning that  Lemma \ref{lem:Independent} can be also deduced from the main result of \cite{HR}.
\end{remark}

\begin{lemma}[\cite{GN,LLT}]\label{lem:Independent2}
Let $G$ be a graph on $n$ vertices with at least one edge and $\delta_{G}=\delta$. Then
\begin{equation}\label{equ:1-3}
\alpha_G\leq \frac{n(\mu_1^G-\delta)}{\mu_1^G}.
\end{equation}
Furthermore, if  $I$ is an independent set of $G$ such that the equality in \eqref{equ:1-3} holds, then the bipartite subgraph  $G_1$ of $G$ induced by those edges between $I$ and $V_{G}\setminus I$ is $(\delta,\mu_1^G-\delta)$-semiregular, that is, each vertex of $I$ has degree $\delta$ and each vertex of $V_{G}\setminus I$ has degree $\mu_1^G-\delta$ in $G_1$.
\end{lemma}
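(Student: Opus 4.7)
The plan is to use a Rayleigh quotient argument with a carefully chosen test vector that encodes the independent set $I$. Let $\alpha = |I|$ and define $x \in \mathbb{R}^{V_G}$ by setting $x_v = n - \alpha$ for $v \in I$ and $x_v = -\alpha$ for $v \in V_G \setminus I$. Then $x$ is orthogonal to the all-ones vector (the eigenvector for $\mu_n^G = 0$), so the Rayleigh principle for the Laplacian yields $\mu_1^G \geq x^{\mathsf T} L_G x / x^{\mathsf T} x$. The aim is to bound both quantities in a way that rearranges into the desired inequality.

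The numerator $x^{\mathsf T} L_G x = \sum_{uv \in E_G}(x_u - x_v)^2$ is supported entirely on the edges between $I$ and $V_G \setminus I$ (since $I$ is independent and edges inside $V_G \setminus I$ contribute zero), and each such edge contributes $n^2$; hence $x^{\mathsf T} L_G x = n^2 e_{I,V_G \setminus I}$. Since every vertex of $I$ has degree at least $\delta$ and all its incident edges land in $V_G \setminus I$, we get $e_{I,V_G \setminus I} \geq \delta \alpha$. A direct count gives $x^{\mathsf T} x = n\alpha(n - \alpha)$. Substituting and rearranging produces $\mu_1^G(n - \alpha) \geq n\delta$, which is exactly \eqref{equ:1-3}.

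For the equality case, two conditions must be forced simultaneously: first, equality in the Rayleigh inequality means $x$ lies in the $\mu_1^G$-eigenspace of $L_G$; second, equality in $e_{I,V_G \setminus I} \geq \delta \alpha$ means every vertex of $I$ has degree exactly $\delta$. To recover the degree of each $v \in V_G \setminus I$ in the bipartite subgraph $G_1$, I would write out the eigenvalue equation $(L_G x)_v = \mu_1^G x_v$ at such a vertex: letting $d_v^I$ denote the number of neighbors of $v$ in $I$, a short computation reduces this to $d_v^I \cdot n = \alpha\, \mu_1^G$, and the equality $\alpha = n(\mu_1^G - \delta)/\mu_1^G$ then forces $d_v^I = \mu_1^G - \delta$. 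The main obstacle is a bookkeeping one rather than a conceptual one: one must be careful that the Rayleigh-principle equality really does yield eigenvector status for $x$ (it does, by the standard spectral-theorem argument) and then carry out the local degree computation at vertices of $V_G \setminus I$ cleanly enough that both the $\delta$- and $(\mu_1^G - \delta)$-regularity sides of the bipartite semiregularity claim fall out at once.
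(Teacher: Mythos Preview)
The paper does not supply its own proof of this lemma; it is quoted from \cite{GN,LLT} without argument. Your Rayleigh-quotient proof with the test vector $x$ taking values $n-\alpha$ on $I$ and $-\alpha$ on $V_G\setminus I$ is correct and is precisely the standard argument in those references: the inequality chain
\[
\mu_1^G \;\geq\; \frac{x^{\mathsf T}L_Gx}{x^{\mathsf T}x}
\;=\;\frac{n^2\,e_{I,\bar I}}{n\alpha(n-\alpha)}
\;\geq\;\frac{n\delta}{n-\alpha}
\]
rearranges to \eqref{equ:1-3}, and your equality analysis (eigenvector status of $x$ forces $d_v^I=\alpha\mu_1^G/n=\mu_1^G-\delta$ for $v\notin I$, while $e_{I,\bar I}=\delta\alpha$ forces $d_v=\delta$ for $v\in I$) is exactly right. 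There is nothing to add.
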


The \textit{vertex connectivity} $\kappa_G$ of a non-complete graph $G$ is the minimum cardinality of $S$ ($S\subset V_{G}$) such that $G-S$ is disconnected. Clearly, $\kappa_G\leq \delta_G$. A  well-known  result of  Fiedler \cite{F} states that each non-complete connected graph $G$ of order $n$ satisfies $\mu_{n-1}^G\leq \kappa_G$. In \cite{KMNS}, Kirkland,  Molitierno, Neumann and Shader characterized the structure of the graphs satisfying  $\mu_{n-1}^G=\kappa_G$. From their proof, we can deduce the following result.

\begin{lemma}[\cite{KMNS}, Theorem 2.1]\label{lem:AlgebraicConnectivity}
Let $G$ be a non-complete connected graph on $n$ vertices. If $\mu_{n-1}^G=\kappa_G=\kappa$, then for any vertex cut set $S$ of size $\kappa$, we have $G=G[S]\vee (G-S)$ and $\mu_{\kappa-1}^{G[S]}\geq 2\kappa-n$. Conversely, if $G$ is of the form $G_1\vee G_2$, where $G_1$ is a graph on $\kappa$ vertices with $\mu_{\kappa-1}^{G_1}\geq 2\kappa-n$ and $G_2$ is a disconnected graph on $n-\kappa$ vertices, then $\mu_{n-1}^G= \kappa_G=\kappa$.
\end{lemma}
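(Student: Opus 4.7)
\smallskip
\noindent\textbf{Proof plan.} I would split the statement into its forward and converse parts and attack both through the well-known Laplacian spectrum formula for a join together with Fiedler's inequality $\mu_{n-1}^{G}\le\kappa_{G}$. Recall that if $H_1,H_2$ have $n_1,n_2$ vertices, then the Laplacian spectrum of $H_1\vee H_2$ consists of $n_1+n_2$, $0$, the numbers $\mu_i^{H_1}+n_2$ for $1\le i\le n_1-1$, and the numbers $\mu_j^{H_2}+n_1$ for $1\le j\le n_2-1$. This identity, plus Courant--Fischer, will drive everything.

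\smallskip
\noindent\emph{Converse direction.} Suppose $G=G_1\vee G_2$ with $|V_{G_1}|=\kappa$, $\mu_{\kappa-1}^{G_1}\ge 2\kappa-n$, and $G_2$ disconnected on $n-\kappa$ vertices. Since $G_2$ is disconnected, $\mu_{n-\kappa-1}^{G_2}=0$, so the join spectrum formula gives that the Laplacian eigenvalues of $G$ other than the trivial $0$ are $\{n\}\cup\{\mu_i^{G_1}+(n-\kappa):1\le i\le \kappa-1\}\cup\{\mu_j^{G_2}+\kappa:1\le j\le n-\kappa-1\}$, the smallest of which is $\min\{\mu_{\kappa-1}^{G_1}+(n-\kappa),\,\kappa\}=\kappa$, using the hypothesis $\mu_{\kappa-1}^{G_1}\ge 2\kappa-n$. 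Thus $\mu_{n-1}^{G}=\kappa$. On the other hand, $V_{G_1}$ is a vertex cut of $G$ of size $\kappa$, so $\kappa_G\le\kappa$, while Fiedler's inequality gives $\kappa_G\ge\mu_{n-1}^G=\kappa$. Hence $\mu_{n-1}^G=\kappa_G=\kappa$.

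\smallskip
\noindent\emph{Forward direction.} Now assume $\mu_{n-1}^G=\kappa_G=\kappa$, let $S$ be a vertex cut of size $\kappa$, and list the components of $G-S$ as $H_1,\dots,H_t$ with $t\ge 2$. The main step is to show that every vertex of $S$ is adjacent to every vertex of $V_G\setminus S$, which forces $G=G[S]\vee (G-S)$. To do this I would, for each index $r$, fix $A=V(H_r)$ and $B=V_G\setminus(S\cup A)$ with $|A|=a,|B|=b$, and plug the test vector
\begin{equation*}
x_v=\tfrac{1}{a}\ (v\in A),\qquad x_v=-\tfrac{1}{b}\ (v\in B),\qquad x_v=0\ (v\in S)
\end{equation*}
into the variational formula $\mu_{n-1}^G=\min_{x\perp \mathbf 1}x^{\top}L_Gx/x^{\top}x$. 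Because $G-S$ has no edges between $A$ and $B$, the only surviving edge contributions are those from $S$ to $A$ and from $S$ to $B$, giving
\begin{equation*}
\frac{x^{\top}L_Gx}{x^{\top}x}=\frac{b\cdot e_{S,A}/a+a\cdot e_{S,B}/b}{a+b}\le\frac{\kappa(a+b)}{a+b}=\kappa,
\end{equation*}
where the inequality uses the trivial bounds $e_{S,A}\le\kappa a$ and $e_{S,B}\le\kappa b$. The hypothesis $\mu_{n-1}^G=\kappa$ forces equality throughout, and hence $e_{S,A}=\kappa a$ and $e_{S,B}=\kappa b$, i.e.\ every vertex of $S$ is joined to every vertex of $H_r$. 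Letting $r$ vary gives $G=G[S]\vee(G-S)$.

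\smallskip
\noindent\emph{Finishing and the main obstacle.} Once the join structure is established, I reapply the join spectrum formula: the eigenvalues of $L_G$ other than $0$ are $n$, $\mu_i^{G[S]}+(n-\kappa)$ for $1\le i\le\kappa-1$, and $\mu_j^{G-S}+\kappa$ for $1\le j\le n-\kappa-1$, with $\mu_{n-\kappa-1}^{G-S}=0$ because $G-S$ is disconnected. Since $\mu_{n-1}^G=\kappa$ is the smallest nonzero eigenvalue, the remaining eigenvalue $\mu_{\kappa-1}^{G[S]}+(n-\kappa)$ must be at least $\kappa$, which rearranges to $\mu_{\kappa-1}^{G[S]}\ge 2\kappa-n$, completing the forward direction. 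The step I expect to be the main obstacle is the Rayleigh quotient argument above: one has to pick the right test vector and verify that the collapse of cross-terms (edges within $S$, within $A$, within $B$, and between $A$ and $B$) really does yield a sharp bound of $\kappa$, so that the extremality $\mu_{n-1}^G=\kappa$ can be leveraged to deduce the precise structural conclusion that every vertex of $S$ attaches to every vertex outside $S$.
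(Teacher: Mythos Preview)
The paper itself does not prove this lemma; it is quoted from Kirkland--Molitierno--Neumann--Shader as a preliminary result, with the remark that the stated form ``can be deduced from their proof.'' Your argument is correct in both directions and is essentially the approach of the original source. The converse follows directly from the Laplacian join spectrum formula (stated in the paper as Lemma~\ref{lem:LaplacianJoin}) together with Fiedler's inequality $\mu_{n-1}^G\le\kappa_G$, exactly as you wrote. In the forward direction your piecewise-constant test vector orthogonal to $\mathbf{1}$ gives the chain
\[
\kappa=\mu_{n-1}^G\le \frac{x^{\top}L_Gx}{x^{\top}x}=\frac{b\,e_{S,A}/a+a\,e_{S,B}/b}{a+b}\le\kappa,
\]
so equality is forced, whence $e_{S,A}=\kappa\,|A|$ for every component $A$ of $G-S$; this yields $G=G[S]\vee(G-S)$, and the inequality $\mu_{\kappa-1}^{G[S]}\ge 2\kappa-n$ then drops out of the join spectrum formula applied to this decomposition. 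There is nothing to add or correct.
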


\begin{lemma}[\cite{CRS}, Theorem 7.1.9]\label{lem:LaplacianJoin}
Let $G$ be a graph of order $n$ and $H$ be a graph of order $n'$. Then the Laplacian eigenvalues of $G\vee H$ are $n+n', \mu_1^G+n',\ldots,\mu_{n-1}^G+n', n+\mu_1^H,\ldots,n+\mu_{n'-1}^H, 0$.
\end{lemma}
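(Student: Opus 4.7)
The plan is to prove \autoref{lem:LaplacianJoin} by writing $L_{G\vee H}$ as a $2\times 2$ block matrix and exhibiting enough eigenvectors to account for all $n+n'$ Laplacian eigenvalues. Labelling the vertices so that those of $G$ come first, one has
$$
L_{G\vee H}=\begin{pmatrix} L_G+n'I_n & -J_{n\times n'}\\ -J_{n'\times n} & L_H+nI_{n'}\end{pmatrix},
$$
because joining augments each degree in $G$ by $n'$ and each degree in $H$ by $n$, while the new edges contribute the all-ones off-diagonal blocks. All subsequent work is a routine verification on this matrix.

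First I would handle the contribution of $L_G$. If $\mathbf{x}\in\mathbb{R}^n$ satisfies $L_G\mathbf{x}=\mu_i^G\mathbf{x}$ and $\mathbf{x}\perp\mathbf{1}_n$, then $J_{n'\times n}\mathbf{x}=0$, so the block product shows that $(\mathbf{x}^{\top},\mathbf{0}^{\top})^{\top}$ is an eigenvector of $L_{G\vee H}$ with eigenvalue $\mu_i^G+n'$. Since we can pick such eigenvectors for $i=1,\ldots,n-1$, this produces the eigenvalues $\mu_1^G+n',\ldots,\mu_{n-1}^G+n'$. Symmetrically, vectors of the form $(\mathbf{0}^{\top},\mathbf{y}^{\top})^{\top}$ with $L_H\mathbf{y}=\mu_j^H\mathbf{y}$ and $\mathbf{y}\perp\mathbf{1}_{n'}$ yield the eigenvalues $\mu_1^H+n,\ldots,\mu_{n'-1}^H+n$.

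It remains to account for the two eigenvalues coming from the span of $(\mathbf{1}_n^{\top},\mathbf{0}^{\top})^{\top}$ and $(\mathbf{0}^{\top},\mathbf{1}_{n'}^{\top})^{\top}$. Because $L_{G\vee H}$ is the Laplacian of a connected graph, the all-ones vector $\mathbf{1}_{n+n'}$ lies in its kernel, so $0$ is an eigenvalue. For the remaining eigenvalue I would test the vector $(n'\mathbf{1}_n^{\top},-n\mathbf{1}_{n'}^{\top})^{\top}$, which is orthogonal to $\mathbf{1}_{n+n'}$; using $L_G\mathbf{1}_n=0$, $L_H\mathbf{1}_{n'}=0$, $J_{n\times n'}\mathbf{1}_{n'}=n'\mathbf{1}_n$ and $J_{n'\times n}\mathbf{1}_n=n\mathbf{1}_{n'}$, a direct block multiplication shows that the image equals $(n+n')$ times the original vector, giving the eigenvalue $n+n'$.

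The only thing to check for completeness is that the $1+1+(n-1)+(n'-1)=n+n'$ eigenvectors produced this way form a linearly independent set. This is immediate once one observes that the three families live in mutually orthogonal subspaces of $\mathbb{R}^{n+n'}$: the $G$-family in $\{(\mathbf{x},\mathbf{0}): \mathbf{x}\perp\mathbf{1}_n\}$, the $H$-family in $\{(\mathbf{0},\mathbf{y}): \mathbf{y}\perp\mathbf{1}_{n'}\}$, and the last two inside $\mathrm{span}\{(\mathbf{1}_n,\mathbf{0}),(\mathbf{0},\mathbf{1}_{n'})\}$, which exhausts $\mathbb{R}^{n+n'}$. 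There is no real obstacle in this proof; the only point that might deserve care is justifying the count of independent eigenvectors coming from $L_G$ and $L_H$, which uses the fact that eigenvectors of a symmetric matrix corresponding to different eigenvalues are orthogonal together with a choice of an orthonormal eigenbasis within each eigenspace of $L_G$ (resp.\ $L_H$) extending the constant vector.
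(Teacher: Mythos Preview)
The paper does not prove \autoref{lem:LaplacianJoin}; it simply quotes it from \cite{CRS} (Theorem~7.1.9) as a known result, so there is no in-paper argument to compare against. Your proof is correct and is essentially the standard one: the block form of $L_{G\vee H}$ is right, the lifts of eigenvectors of $L_G$ and $L_H$ orthogonal to the constant vectors give the shifted eigenvalues, and the two remaining eigenvalues $0$ and $n+n'$ come from the span of the two constant blocks. Nothing is missing.
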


\begin{lemma}[\cite{GH,GL}]\label{lem:DisjointSets}
Let $G$ be a graph of order $n$, and let $S$ be a vertex cut set of $G$. Suppose that $V_{G}\setminus S=X\cup Y$ with  $X\cap Y=\emptyset$ and  $|X|\leq |Y|$. Then
$$
|X|\leq \frac{\mu_1^G-\mu_{n-1}^G}{2\mu_1^G}\cdot n~\mbox{and}~|S|\geq\frac{2\mu_{n-1}^G}{\mu_1^G-\mu_{n-1}^G}\cdot |X|,
$$
where each equality holds only if $|X| = |Y|$.
\end{lemma}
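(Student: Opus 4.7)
The plan is to study $L_G$ restricted to the two-dimensional subspace $W = \mathrm{span}\{w_X, w_Y\} \subset \mathbb{R}^n$, where $w_X = \mathbf{1}_X - (|X|/n)\mathbf{1}$ and $w_Y = \mathbf{1}_Y - (|Y|/n)\mathbf{1}$ are the centered indicator vectors of $X$ and $Y$ (here $\mathbf{1}_U$ denotes the $0/1$ indicator of $U \subseteq V_{G}$). Both lie in $\mathbf{1}^\perp$, so by Courant--Fischer the two eigenvalues $\tilde{\lambda}_1 \geq \tilde{\lambda}_2$ of $L_G|_W$ satisfy $\mu_1^G \geq \tilde{\lambda}_1 \geq \tilde{\lambda}_2 \geq \mu_{n-1}^G$. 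The crucial structural input is that $S$ separates $X$ from $Y$, so $e_{X,Y}=0$ and hence $\mathbf{1}_X^T L_G \mathbf{1}_Y = -e_{X,Y} = 0$; therefore the bilinear form $(v,u)\mapsto v^T L_G u$ on $W$ is represented, in the basis $\{w_X,w_Y\}$, by the diagonal matrix with entries $p := e_{X,S}$ and $q := e_{Y,S}$.

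Next I would set up and solve the $2\times 2$ generalized eigenvalue problem for this diagonal matrix against the Gram matrix of $\{w_X,w_Y\}$. After routine simplification one obtains
\[
\tilde{\lambda}_1 \tilde{\lambda}_2 \;=\; \frac{pq\,n}{|X||Y||S|}, \qquad \tilde{\lambda}_1 + \tilde{\lambda}_2 \;=\; \frac{p(n-|Y|)}{|S||X|} + \frac{q(n-|X|)}{|S||Y|}.
\]
Applying AM--GM to the two summands of the trace eliminates the graph-dependent quantities $p$ and $q$:
\[
\frac{(\tilde{\lambda}_1 + \tilde{\lambda}_2)^2}{\tilde{\lambda}_1 \tilde{\lambda}_2} \;\geq\; \frac{4(n-|X|)(n-|Y|)}{|S|\,n}.
\]

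To finish, set $\rho := \tilde{\lambda}_1/\tilde{\lambda}_2 \geq 1$ and use that $(\rho+1)^2/\rho$ is strictly increasing on $[1,\infty)$; the previous display then becomes an explicit lower bound on $\rho$, which in turn yields a lower bound on $\mu_1^G/\mu_{n-1}^G \geq \tilde{\lambda}_1/\tilde{\lambda}_2 = \rho$. The two stated inequalities are equivalent, respectively, to $\mu_1^G/\mu_{n-1}^G \geq n/(n-2|X|)$ and $\mu_1^G/\mu_{n-1}^G \geq (|S|+2|X|)/|S|$. Substituting $n-|X|=|S|+|Y|$ and $n-|Y|=|S|+|X|$, both reductions collapse to the single elementary inequality $|X|(|Y|-|X|) \geq 0$, which holds since $|X|\leq|Y|$ and forces $|X|=|Y|$ in the equality case. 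The delicate step I expect is precisely this final verification: showing that the AM--GM bound, which has already discarded the individual values of $p$ and $q$, is nonetheless sharp enough to yield both stated inequalities with the correct constants; what makes it succeed is that both translations land on the same algebraic identity $|X|(|Y|-|X|) \geq 0$.
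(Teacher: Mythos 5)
The paper offers no proof of this lemma; it is quoted directly from \cite{GH} and \cite{GL}, so there is no in-text argument to compare yours against. Your proposal is correct and is, in substance, the Haemers-style interlacing argument behind the cited result: I checked that the Gram matrix of $\{w_X,w_Y\}$ has determinant $|X||Y||S|/n$, so the form and Gram matrices give exactly the trace and determinant you state; that AM--GM cancels $p$ and $q$ as claimed; that the resulting lower bound $4+4|X||Y|/(|S|n)$ is at least $f(1)=4$, so the monotonicity of $f(\rho)=(\rho+1)^2/\rho$ on $[1,\infty)$ can legitimately be inverted; and that the two target inequalities $r\geq n/(n-2|X|)$ and $r\geq(|S|+2|X|)/|S|$ reduce, after clearing denominators, to $(|Y|-|X|)(|S|+|Y|)\geq 0$ and $(|Y|-|X|)(|S|+|X|)\geq 0$ respectively --- not literally to $|X|(|Y|-|X|)\geq 0$ as you wrote, but the conclusion and the equality condition $|X|=|Y|$ are identical. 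Two housekeeping points: your argument uses $e_{X,Y}=0$, which is the intended (but unstated) hypothesis --- without it the lemma is false, and the paper indeed applies it with $X$ and $Y$ unions of components of $G-S$; and the degenerate case $\mu_{n-1}^G=0$ should be dispatched separately, since passing from $\tilde\lambda_1/\tilde\lambda_2$ to $\mu_1^G/\mu_{n-1}^G$ requires $\mu_{n-1}^G>0$ (when $\mu_{n-1}^G=0$ both inequalities are trivial).
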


\section{Proof of  Theorems \ref{thm:Main1} and \ref{thm:Main2}}

\noindent{\bfseries Proof of Theorem \ref{thm:Main1}.}
As $\tau_{K_n}=\infty$ by definition, we can assume that $G\not\cong K_n$. Let $S$ be a vertex cut set of $G$ such that $\tau_G=|S|/\omega_{G-S}$. Let $\tau=\tau_G$, $\omega=\omega_{G-S}$ and $U=V_{G-S}$. Then $\omega\geq 2$, $|S|=\tau \omega\leq n-2$ and $|U|=n-\tau \omega$. Clearly, $\omega\leq e_{S,U}\leq |S|\cdot \Delta=\tau \omega\cdot\Delta$, we have
$$\tau\geq \frac{1}{\Delta}.$$
Also, by taking exactly one vertex from each component of $G-S$, we obtain an independent set of size $\omega$. Then, by Lemma \ref{lem:Independent1},
\begin{equation*}\label{equ:1}
\omega\leq \alpha_G\leq \frac{n\Delta}{\Delta+\delta},
\end{equation*}
which gives that
$$
\tau=\frac{|S|}{\omega}\geq \frac{1}{\omega}\geq\frac{\Delta+\delta}{n\Delta}.
$$
Thus it remains to prove that
$$
\tau\geq \frac{\delta(\xi+1)}{\Delta\xi}-2.
$$
If $\xi\geq \displaystyle{\frac{\delta}{2\Delta-\delta}}$, then $\displaystyle{\frac{\delta(\xi+1)}{\Delta\xi}}\leq 2$ and there is nothing to prove. In what follows, we always assume that $\xi<\displaystyle{\frac{\delta}{2\Delta-\delta}}$, i.e., $\displaystyle{\frac{\delta(\xi+1)}{\Delta\xi}}>2$. By Lemma \ref{lem:Independent},
\begin{equation}\label{equ:1}
\omega\leq \alpha_G\leq \frac{2m\xi}{\delta(\xi+1)}\leq \frac{n\Delta\xi}{\delta(\xi+1)}<\frac{n}{2}.
\end{equation}
If $|U|\leq \displaystyle{\frac{2n\Delta\xi}{\delta(\xi+1)}}$, then from \eqref{equ:1} we obtain
$$
\tau=\frac{n-|U|}{\omega}\geq \left(n-\frac{2n\Delta\xi}{\delta(\xi+1)}\right)\cdot \frac{\delta(\xi+1)}{n\Delta\xi}=\frac{\delta(\xi+1)}{\Delta\xi}-2,
$$
as desired. Now suppose $|U|>\displaystyle{\frac{2n\Delta\xi}{\delta(\xi+1)}}$. Then it follows from  \eqref{equ:1} that
\begin{equation}\label{equ:2}
|U|\geq 2\omega+1.
\end{equation}
Let $U_1,U_2,\ldots,U_\omega$ ($|U_1|\leq |U_2|\leq \cdots\leq |U_\omega|$) be the sets of vertices of the connected components of $G-S$.  The following discussion is divided into two cases.
\vspace*{3mm}

\noindent{\bf Case 1.} $\sum_{i=1}^{\omega-1}|U_i|=\omega-1$.

\vspace*{1mm}

In this situation,   each $U_i$ ($1\leq i\leq \omega-1$) contains only one vertex. Let $W=\cup_{i=1}^{\omega-1}U_i$. Then $|W|=\omega-1$ and $e_{W,U}=0$. By Lemma \ref{lem:ExpanderMixing},
$$
\frac{\nu_W\nu_U}{\nu_V}\leq \xi\cdot\sqrt{\nu_W\nu_U\left(1-\frac{\nu_W}{\nu_V}\right)\left(1-\frac{\nu_U}{\nu_V}\right)}\leq \xi\cdot\sqrt{\nu_W\nu_U\left(1-\frac{\nu_U}{\nu_V}\right)},
$$
which leads to
\begin{equation}\label{equ:3}
\frac{\nu_W\nu_U}{\nu_V\cdot \xi^2}\leq\nu_V-\nu_U=\nu_S.
\end{equation}
Note that $\nu_W\geq \delta |W|$, $\nu_U\geq \delta |U|$, $\nu_V\leq \Delta n$ and $\nu_S\leq \Delta |S|=\Delta \tau \omega$. According to \eqref{equ:3}, we obtain
$$
\tau\geq \frac{\delta^2|W||U|}{n\Delta^2\xi^2\omega}=\frac{\omega-1}{\omega}\cdot \frac{\delta^2|U|}{n\Delta^2\xi^2}=\left(1-\frac{1}{\omega}\right)\cdot \frac{\delta^2|U|}{n\Delta^2\xi^2}\geq \frac{\delta^2|U|}{2n\Delta^2\xi^2}.
$$
Combining this with  $|U|>\displaystyle{\frac{2n\Delta\xi}{\delta(\xi+1)}}$ yields that
\begin{align*}
\tau&>\frac{\delta^2}{2n\Delta^2\xi^2}\cdot \frac{2n\Delta\xi}{\delta(\xi+1)}
=\frac{\delta}{\Delta}\left(\frac{1}{\xi}-\frac{1}{\xi+1}\right)\geq \frac{\delta}{\Delta}\left(\frac{1}{\xi}-1\right)\\
&\geq \frac{\delta}{\Delta}\left(\frac{1}{\xi}-\left(\frac{2\Delta}{\delta}-1\right)\right)=\frac{\delta(\xi+1)}{\Delta\xi}-2,
\end{align*}
as desired.

\vspace{2mm}

\noindent{\bf Case 2.} $\sum_{i=1}^{\omega-1}|U_i|\geq \omega$.

\vspace*{1mm}

By using the same method as in  \cite[Claim 2]{G1}, we can obtain the following claim. For the purpose of review, we write the proof again.

\begin{claim}\label{claim:1}
There exists some $\mathcal{I}\subset [\omega]=\{1,2,\cdots,\omega\}$ such that $R_{\mathcal{I}}=\cup_{i\in \mathcal{I}} U_i$ and $T_{\mathcal{I}}=U\setminus R_{\mathcal{I}}=\cup_{i\in [\omega]\setminus \mathcal{I}} U_i$ satisfy $e_{R_{\mathcal{I}},T_{\mathcal{I}}} = 0$ and $|R_{\mathcal{I}}|,|T_{\mathcal{I}}|\geq \omega$.
\end{claim}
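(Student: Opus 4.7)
The key first observation is that $U_1,\ldots,U_\omega$ are the distinct connected components of $G-S$, so no edges run between any two of them. Consequently the condition $e_{R_\mathcal{I},T_\mathcal{I}}=0$ is automatic for every $\mathcal{I}\subseteq[\omega]$, and the claim reduces to a purely combinatorial statement: find a proper nonempty $\mathcal{I}\subseteq[\omega]$ with $\sum_{i\in\mathcal{I}}|U_i|\in[\omega,\,|U|-\omega]$. This target interval is nonempty because $|U|\geq 2\omega+1$ by \eqref{equ:2}.

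My plan is to split on the value of $|U_\omega|$. If $|U_\omega|\geq\omega$, then $\mathcal{I}=\{\omega\}$ works immediately: $|R_\mathcal{I}|=|U_\omega|\geq\omega$, while $|T_\mathcal{I}|=\sum_{i<\omega}|U_i|\geq\omega$ by the Case 2 hypothesis. Otherwise $|U_i|\leq\omega-1$ for every $i$, and my aim would be to produce a subset whose aggregate size equals \emph{exactly} $\omega$ (which automatically forces $|T_\mathcal{I}|=|U|-\omega\geq\omega+1$). Here the natural tool is Lemma \ref{lem:sum}: choose the largest index $p$ with $|U_1|+\cdots+|U_p|\leq 2p-1$ (note $p\leq\omega-1$ since $|U|\geq 2\omega+1$), use the lemma to realize any desired value in $[0,\,|U_1|+\cdots+|U_p|]$ as a subset sum over $[p]$, and adjoin a single component $U_j$ with $j>p$ to boost the total up to $\omega$.

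The main obstacle is verifying that this adjoining step is actually feasible, i.e.\ that the required "offset" $\omega-|U_j|$ lies in $[0,\,|U_1|+\cdots+|U_p|]$ for some $j>p$. When $|U_1|+\cdots+|U_{p+1}|\geq\omega$ the choice $j=p+1$ succeeds: $\omega-|U_{p+1}|\geq 0$ follows from $|U_{p+1}|\leq\omega-1$, and $\omega-|U_{p+1}|\leq|U_1|+\cdots+|U_p|$ is exactly the hypothesis. The delicate sub-case is when $|U_1|+\cdots+|U_{p+1}|<\omega$, which forces $p$ to be small and correspondingly many sizeable components beyond the initial segment; here I would adjoin two or more of the excluded components, or equivalently iterate the maximality argument for $p$ one step further. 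The bookkeeping in the tight range $|U|\in[2\omega+1,\,3\omega-3]$---where neither a single sorted prefix nor a single application of Lemma \ref{lem:sum} suffices---is the step I expect to cost the most effort; the combined constraints $|U_i|\leq\omega-1$, $|U|\geq 2\omega+1$ and $\sum_{i<\omega}|U_i|\geq\omega$ should together be tight enough to rule out any pathological configuration in which every subset sum avoids $[\omega,\,|U|-\omega]$.
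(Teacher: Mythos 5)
Your reduction is correct and your first case ($|U_\omega|\geq\omega$) matches the paper's, but the second case contains a genuine gap --- and not only the one you flag yourself. First, the ``largest $p$ with $|U_1|+\cdots+|U_p|\leq 2p-1$'' need not exist: if every component has at least two vertices (e.g.\ $\omega=5$ and $|U_i|=3$ for all $i$, which is consistent with \eqref{equ:2} and with $\sum_{i<\omega}|U_i|\geq\omega$), then $|U_1|+\cdots+|U_p|\geq 2p>2p-1$ for every $p\geq 1$, so your starting point is empty. Second, your stated sub-goal of realizing the value $\omega$ \emph{exactly} as a subset sum of the $|U_i|$ is not always achievable: in the same example the subset sums of $\{3,3,3,3,3\}$ are the multiples of $3$, none of which equals $5$ (although $6\in[\omega,|U|-\omega]$, so the claim itself still holds). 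So the ``bookkeeping'' you defer is not mere bookkeeping; the plan as written cannot close without an additional idea.

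The paper's device is to stop insisting on exact subset sums of the true component sizes. It places $U_\omega$ into $\mathcal{I}$ from the start and aims only for the residual $\ell=\omega-|U_\omega|$; since \eqref{equ:2} forces $|U_\omega|\geq 3$, one has $1\leq\ell\leq\omega-3$. If $\sum_{i<\omega}|U_i|\leq 2\omega-3$, Lemma \ref{lem:sum} with $p=\omega-1$ realizes $\ell$ directly. Otherwise one \emph{shrinks} each $U_i$ ($i<\omega$) to a nonempty subset $U_i'$ with $\sum_{i<\omega}|U_i'|=2\omega-3$, applies Lemma \ref{lem:sum} to the $|U_i'|$ to find $\mathcal{I}_1$ with $\sum_{i\in\mathcal{I}_1}|U_i'|=\ell$, and sets $\mathcal{I}=\mathcal{I}_1\cup\{\omega\}$. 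Then $|R_{\mathcal{I}}|\geq\ell+|U_\omega|=\omega$ (the true sizes only help), while $|T_{\mathcal{I}}|\geq 2\omega-3-\ell\geq\omega$. This disposes of your ``delicate sub-case'' uniformly, including the tight range $|U|\in[2\omega+1,3\omega-3]$. To salvage your own route you would need at least to replace ``sum exactly $\omega$'' by ``sum in $[\omega,|U|-\omega]$'' and to handle configurations in which no prefix satisfies the hypothesis of Lemma \ref{lem:sum}; the shrinking trick is the cleanest way to do both.
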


\noindent{\bfseries Proof of Claim \ref{claim:1}.} We assert that $|U_\omega|\geq 3$, since otherwise we have $|U|\leq 2\omega$, contrary to \eqref{equ:2}.  Also, if $|U_\omega|\geq \omega$ then we can take $\mathcal{I}=\{1,\ldots,\omega-1\}$, and the result follows immediately. Thus it remains to consider that  $3\leq |U_\omega|\leq \omega-1$.  Let $\ell=\omega-|U_\omega|$. Then $1\leq \ell\leq \omega-3$. If $\sum_{i=1}^{\omega-1} |U_i|\leq 2\omega-3$, by Lemma \ref{lem:sum}, there exists some $\mathcal{I}_1\subset\{1,\ldots,\omega-1\}$ such that $\sum_{i\in\mathcal{I}_1}|U_i|=\ell$. Take $\mathcal{I}=\mathcal{I}_1\cup \{\omega\}$.  Then we see that  $|R_{\mathcal{I}}|=\ell+|U_\omega|=\omega$,  $|T_{\mathcal{I}}|=|U|-|R_{\mathcal{I}}|\geq \omega+1$ by \eqref{equ:2}, and $e_{R_{\mathcal{I}},T_{\mathcal{I}}}=0$, as required. If $\sum_{i=1}^{\omega-1} |U_i|>2\omega-3$, let $U_i'$ be a nonempty subset of $U_i$  ($i=1,\ldots,\omega-1$) such that $\sum_{i=1}^{\omega-1}|U_i'|=2\omega-3$. Again by Lemma \ref{lem:sum}, there exists some $\mathcal{I}_1\subset\{1,\ldots,\omega-1\}$ such that $\sum_{i\in\mathcal{I}_1}|U_i'|=\ell$. Let $\mathcal{I}=\mathcal{I}_1\cup\{\omega\}$. Then we see that $|R_{\mathcal{I}}|=\sum_{i\in \mathcal{I}_1}|U_i|+|U_{\omega}|\geq \sum_{i\in \mathcal{I}_1}|U_i'|+|U_{\omega}|=\ell+|U_\omega|=\omega$, $|T_{\mathcal{I}}|=\sum_{i\in[\omega]\setminus \mathcal{I}}|U_i|\geq \sum_{i\in[\omega]\setminus \mathcal{I}}|U_i'| =2\omega-3-\ell\geq 2\omega-3-(\omega-3)=\omega$, and $e_{R_{\mathcal{I}},T_{\mathcal{I}}}=0$, as desired.
\hfill $\Box$

\vspace*{3mm}

Let $\mathcal{I}$ be the subset of $[\omega]$ guaranteed by  Claim \ref{claim:1}, and let $R=R_{\mathcal{I}}$ and $T=T_{\mathcal{I}}$. Since $e_{R,T}=0$, by Lemma \ref{lem:ExpanderMixing}, we have
$$
\frac{\nu_R\nu_T}{\nu_V}\leq \xi\cdot\sqrt{\nu_R\nu_T\left(1-\frac{\nu_R}{\nu_V}\right)\left(1-\frac{\nu_T}{\nu_V}\right)},
$$
or equivalently,
\begin{equation}\label{equ:4}
\nu_R\,\nu_T\leq \xi^2 (\nu_V-\nu_R)\,(\nu_V-\nu_T).
\end{equation}
Without loss of generality, suppose that $\nu_R\leq \nu_T$. Then \eqref{equ:4} implies that
$$
\nu_R^2\leq \xi^2(\nu_V-\nu_R)^2,
$$
that is,
$$
\nu_R\leq \xi(\nu_V-\nu_R).
$$
Therefore,
\begin{equation}\label{equ:5}
\nu_R\leq \frac{\xi}{\xi+1}\nu_V.
\end{equation}
Note that $\nu_T=\nu_V-\nu_S-\nu_R$. Again by \eqref{equ:4},
$$
\nu_R(\nu_V-\nu_S-\nu_R)\leq \xi^2 (\nu_V-\nu_R)(\nu_S+\nu_R),
$$
which gives that
\begin{equation}\label{equ:6}
\nu_R\nu_V\leq (\xi^2\nu_V +(1-\xi^2)\nu_R)(\nu_S+\nu_R).
\end{equation}
By \eqref{equ:5}, we see that $(1-\xi^2)\nu_R\leq \xi(1-\xi)\nu_V$. Then from  \eqref{equ:6} we obtain
$$
\nu_R\nu_V\leq \xi\,\nu_V(\nu_S+\nu_R),
$$
which implies that
$$
\nu_S\geq \left(\frac{1}{\xi}-1\right)\nu_R.
$$
Combining this with  $\nu_S\leq \Delta|S|$ and $\nu_R\geq \delta |R|$ yields that
$$
\tau \omega=|S|\geq \left(\frac{1}{\xi}-1\right) \frac{\delta}{\Delta}|R|.
$$
Hence,
$$
\tau\geq \left(\frac{1}{\xi}-1\right) \frac{\delta}{\Delta}\cdot \frac{|R|}{\omega}\geq \left(\frac{1}{\xi}-1\right) \frac{\delta}{\Delta}\geq \left(\frac{1}{\xi}-\left(\frac{2\Delta}{\delta}-1\right)\right) \frac{\delta}{\Delta}=\frac{\delta(\xi+1)}{\Delta\xi}-2.
$$

Therefore, for all situations, we obtain
$$
\tau\geq \frac{\delta(\xi+1)}{\Delta\xi}-2,
$$
and the result follows.
\hfill $\Box$ \\[1mm]

We now prove Theorem \ref{thm:Main2}.

\vspace{2mm}

\noindent{\bfseries Proof of Theorem \ref{thm:Main2}.} Let $S$ be a vertex cut set of $G$ such that $\tau_G=|S|/\omega_{G-S}$.  Let $\tau=\tau_G$, $\omega=\omega_{G-S}$ and $\kappa=\kappa_G$.

If the equality in \eqref{equ:1-1}  holds,  according to the proof of Theorem \ref{thm:toughness} in \cite{GH}, we have $|S|=\kappa=\mu_{n-1}^G$ and $\omega=\alpha_G=n(\mu_1^G - \delta)/\mu_1^G$. Since $\mu_{n-1}^G=\kappa$, by Lemma \ref{lem:AlgebraicConnectivity}, $G$ is of the form $G[S]\vee (G-S)$, where $\mu_{\kappa-1}^{G[S]}\geq 2\kappa-n$ and $1\leq \kappa\leq n-2$. Then $\mu_1^G=n$ by Lemma \ref{lem:LaplacianJoin}. Also, by taking exactly one vertex from each component of $G-S$, we obtain an independent set $I$ of size $\omega=\alpha_G=n(\mu_1^G - \delta)/\mu_1^G=n-\delta$. Then, by Lemma \ref{lem:Independent2}, the edges between $I$ and $\bar{I}=V_{G}\setminus I$ induce a $(\delta,n-\delta )$-semiregular bipartite graph. Considering that $S\subset\bar{I}$ and $\omega\geq 2$, we claim that all components of $G-S$ are singletons and $\delta=|S|=\kappa=n-\omega$. Thus $G$ can be written as $H\vee (n-\delta)K_1$, where $H=G[S]$ has order $\delta$ ($1\leq \delta\leq n-2$) and $\mu_{\delta-1}^H\geq 2\delta-n$. Conversely, suppose that $G\cong H\vee (n-\delta)K_1$ for some graph $H$ of  order $\delta$ ($1\leq \delta\leq n-2$) with $\mu_{\delta-1}^H\geq 2\delta-n$. Then $\delta_G=\delta$, and it is easy to verify that  $\mu_{n-1}^G=\delta=\kappa_G$, $\mu_1^G=n$ and
$$\tau_G=\frac{\delta}{n-\delta}=\frac{\mu_1^G\mu_{n-1}^G}{n(\mu_1^G-\delta)},$$
as desired.

Now assume that the equality in \eqref{equ:1-2} holds.  We claim that all components of $G-S$ must be singletons, that is, $n-|S|=\omega$. In fact, if $n-|S|\geq \omega+1$,  according to the proof of Theorem \ref{thm:toughness} in \cite{GH}, we see that the components of $G-S$ can be partitioned into two disjoint sets $X$ and $Y$ such that $|Y|\geq |X|\geq \omega/2$, and the equality  in \eqref{equ:1-2} implies that $|X|=\omega/2$ and $|S|=2\mu_{n-1}^G/(\mu_1^G-\mu_{n-1}^G)\cdot |X|$. Then, by Lemma \ref{lem:DisjointSets}, we must have $|Y|=|X|$, and so $n-|S|=|X|+|Y|=\omega$, a contradiction. Therefore,
$$
\tau_G=\frac{\mu_{n-1}^G}{\mu_1^G-\mu_{n-1}^G}=\frac{|S|}{\omega}=\frac{n-\omega}{\omega},
$$
with gives that
\begin{equation}\label{equ:7}
\omega=\frac{n(\mu_1^G-\mu_{n-1}^G)}{\mu_1^G}.
\end{equation}
Also, by Lemma \ref{lem:Independent2},
\begin{equation}\label{equ:8}
\omega\leq \alpha_G\leq \frac{n(\mu_1^G-\delta)}{\mu_1^G}.
\end{equation}
Combining \eqref{equ:7} and \eqref{equ:8}, we obtain  $\mu_{n-1}^G\geq \delta\geq \kappa$, and so $\mu_{n-1}^G=\delta=\kappa$ because we have known that $\mu_{n-1}^G\leq \kappa$. By Lemma \ref{lem:AlgebraicConnectivity}, $G$ must be the join of two graphs. Then $\mu_1^G=n$, and $|S|=n-\omega=\kappa=\delta$ by \eqref{equ:7}. Thus, again by Lemma \ref{lem:AlgebraicConnectivity}, $G$ is of the form $H\vee (n-\delta)K_1$, where $H=G[S]$ has order $\delta$ ($1\leq \delta\leq n-2$) and $\mu_{\delta-1}^H\geq 2\delta-n$. Conversely, if $G\cong H\vee (n-\delta)K_1$ for some graph $H$ of  order $\delta$ ($1\leq \delta\leq n-2$) with $\mu_{\delta-1}^H\geq 2\delta-n$, then $\delta_G=\delta$, $\mu_{n-1}^G=\delta=\kappa_G$, $\mu_1^G=n$, and
$$\tau_G=\frac{\delta}{n-\delta}=\frac{\mu_{n-1}^G}{\mu_1^G-\mu_{n-1}^G},$$
as desired.
\hfill $\Box$

\vspace*{3mm}

\section*{Author Contributions} Conceptualization, X.H., K.C.D.; investigation, X.H., K.C.D. and S.Z.; writing --
original draft preparation, X.H., K.C.D. and S.Z.; writing -- review and editing, X.H., K.C.D. 

\section*{Declaration of competing interest}
The authors declared that they have no conflicts of interest to this work.

\section*{Acknowledgements}
The authors are much grateful to two anonymous referees for their valuable comments on our paper, which have
considerably improved the presentation of this paper. X. Huang is supported by the National Natural Science Foundation of China (Grant No. 11901540). K. C. Das is supported by National Research Foundation funded by the Korean government (Grant No. 2021R1F1A1050646). S. Zhu is supported by the Undergraduate Training Program on Innovation and Entrepreneurship (Grant No. X202110251335).

\end{document}